\numberwithin{equation}{section}
\title[Distance-like functions and smooth approximations]{Distance-like functions and smooth approximations \\ \ \\ {\it a correction to} \\ \ \\ ``Logarithm laws for flows on homogeneous spaces"} %Sets of %``Dirichlet-Improvable" 
\author{Dmitry Kleinbock}
\address{Brandeis University, Waltham MA
02454-9110 {\tt kleinboc@brandeis.edu}}
\author{Gregory Margulis}
\address{Yale University, New Haven CT 06520
{\tt grigorii.margulis@yale.edu }}
\newif\ifdraft\drafttrue
\newcommand\eq[2]{{\ifdraft{\ \tt [#1]}\else\ignorespaces\fi}\begin{equation}\label{eq:
#1}{#2}\end{equation}}
\newcommand {\pd}{\Phi_\Delta}
\newcommand {\equ}[1] {\eqref{eq: #1}}
\newcommand{\R}{{\mathbb{R}}}
\newcommand{\Z}{{\mathbb{Z}}}
\newcommand{\N}{{\mathbb{N}}}
\newcommand{\ggm}{G/\Gamma}
\newcommand{\dist}{{\rm dist}}
\newcommand{\df}{{\, \stackrel{\mathrm{def}}{=}\, }}
\newcommand{\vre}{\varepsilon}
\newcommand\hs{homogeneous space}
\newcommand {\ignore}[1] {}
\newtheorem{thm}{Theorem}[section]
\newtheorem{lem}[thm]{Lemma}
\begin{document}
\ignore{}
\begin{abstract}
{One of the propositions in the paper \cite{KM}, %by the first- and second-named authors,
 related to approximating certain sets by smooth functions, was recently found to be incorrect. Here we %use an observation supplied by Shucheng Yu and 
 correct the mistake.}
\end{abstract}
\thanks{Supported by NSF grants DMS-1265695 and DMS-1600814.}

%}
\date{July 11, 2017}
%\dedicatory{Preliminary, as of August 2006}
\maketitle
\section{Statement of results}\label{intro}
\noindent Let us reproduce the setting of \cite{KM} in a slightly more general form. Let $G$ be a %connected semisimple 
Lie
group  %without compact factors 
and  $\Gamma$ %an irreducible
a lattice in $G$. Denote by $X$ the \hs\ $\ggm$ and by $\mu$ the $G$-invariant probability measure on $X$. In what follows, $\|\cdot\|_p$ will stand for the $L^p$ norm. Fix a basis $\{Y_1,\dots,Y_n\}$ 
for the Lie algebra
$\frak g$
of
$G$, and, given a smooth
 function $h \in C^\infty(X)$ and $\ell\in\N$, define the ``{\sl $L^2$, order $\ell$" Sobolev norm} $\|h \|_{2,\ell}$ of $h $ by 
 $$
 \|h \|_{2,\ell} \df \sum_{|\alpha| \le \ell}\|D^\alpha h \|_2,
 $$
 where 
 %$\|\cdot\|_2$ stands for the norm in $L^2(X,\mu)$, 
 $\alpha = (\alpha_1,\dots,\alpha_n)$ is a multiindex, $|\alpha| = \sum_{i=1}^n\alpha_i$, and $D^\alpha$ is a differential operator of order $|\alpha|$ which is a monomial in  $Y_1,\dots, Y_n$, namely $D^\alpha = Y_1^{\alpha_1}\cdots Y_n^{\alpha_n}$.
%\end{defn}
This definition depends on the basis,
however, a change of basis
would only  distort
$ \|h \|_{2,\ell}$
by a bounded factor. We also let 
$$C^\infty_2(X) = \{h \in C^\infty(X): \|h \|_{2,\ell} < \infty\text{ for any }\ell = \Z_+\}.$$
 Now let $\Delta$ be a {real-valued} function on $X$, and for $z{\in\R}$ denote 
$$\pd(z) \df\mu\left(\Delta^{-1}\big([z,\infty)\big)\right).$$ 
%by $A(z)$ its super-level set, 
%$$
%A(z) \df \Delta^{-1}\big([z,\infty)\big).
%$$
Say that $\Delta$ is {\sl DL\/} (an abbreviation for
``distance-like'') if {there exists $z_0 \in\R$   such that  $\pd(z_0) > 0$ and
\begin{itemize}
\item[\rm (a)] $\Delta$ is  uniformly continuous on $\Delta^{-1}\big([z_0,\infty)\big)$; that is,    $\forall\,\vre>0$ there exists a neighborhood $U$ of the identity in ${G}$ such that   for any $x\in X$ with $\Delta(x) \ge z_0$,
$$  g\in U \quad\Longrightarrow \quad|\Delta(x)  - \Delta(gx) | < \vre;$$
%(in particular, bounded on compact subsets of $X$), and 
\item[\rm (b)] the function $\pd$ %$$z\mapsto \mu\big(A(z)\big)
%\pd(z)\df \mu\left(\Delta^{-1}\big([z,\infty)\big)\right)
%$$
%$$
%\Delta\text{ is uniformly continuous},  \tag DL1
%$$
%and 
%$\td$ 
does not decrease very fast, more precisely, if
\begin{equation}\label{dl}\tag{DL}
\exists\, c,\delta > 0%{\text{ and } z_0\in\R} 
\text{ such that } %\mu\big(A(z + \delta)\big) 
{\pd(z ) \ge c  \pd(z - \delta)}
%\mu\big(A(z)\big)
\ \ \ \forall\, z\ge {z_0}.
\end{equation}
\end{itemize}}
%For $k > 0$, we will also say that  $\Delta$ is {\sl $k$-DL\/}
%if it is  uniformly continuous and in addition 
%\begin{equation}\label{kdl}\tag{$k$-DL}
%\exists\,C_1,C_2>0\text{ such that } C_1e^{-kz} \le \pd(z)\le C_2e^{-kz} \quad\forall\,z\in\R\,.
%\end{equation}
The paper \cite{KM} gives several  examples of DL functions on \hs s of semisimple Lie groups. The main goal of that paper was to study statistics of excursions of generic trajectories of   flows on $X$ into sets $\Delta^{-1}\big([z,\infty)\big)
%A(z)
$ for large {enough} $z$. A crucial ingredient of the argument was approximation of characteristic functions of those sets %$A(z)$ 
by smooth functions with uniformly bounded Sobolev norms. However, as was recently observed by Dubi Kelmer and Shucheng Yu, 
the argument in the main approximation statement, namely
 \cite[Lemma~4.2]{KM}, contains a mistake. To state a corrected version below,  we 
need to weaken the regularity assumption on the smooth functions approximating the sets  $\Delta^{-1}\big([z,\infty)\big)$. Namely, for $\ell\in\Z_+$ and $C >
0$, let us say that a  nonnegative function 
$h \in C^\infty_2(X) $ is {\sl $(C,\ell)$-regular\/} if
\begin{equation}\label{reg}\tag{REG}
\|h\|_{2,\ell} \le C 
%\left(\int_X h\,d\mu\right)^{1/2}
\sqrt{\|h\|_1}%^{1/2}
.
\end{equation}
Note that  the argument of  \cite{KM} used a stronger condition:
\begin{equation}\label{regold}\tag{REG-old}
\|h\|_{2,\ell} \le C  \|h\|_1%\int_X h\,d\mu
.
\end{equation}
Equivalently one can replace $\sqrt{\|h\|_1}$ in \eqref{reg} with $ \|h\|_2$, but for technical reasons it is more convenient to use the square root of the $L^1$ norm. 
Here is the corrected statement of  \cite[Lemma~4.2]{KM}:

\begin{thm}\label{lem4.2} Let  $\Delta$ be a
DL function on $X$. Then for any $\ell\in\Z_+$ there exists  $C > 0$ %{and  $z_0\in\R$} 
such
that for every $z {\ge z_0}$ one can find two  $(C,\ell)$-regular nonnegative
functions $h'$ and $h''$ on $X$ such that  
\eq{4.2}{
h' \le 1_{\Delta^{-1}([z,\infty))} \le h''\quad \text{and}\quad c%\cdot
\pd(z)
%\mu\big(A(z)\big) 
\le \|h'\|_1 \le \|h''\|_1
%\int_X h'\,d\mu\ \le \int_X h''\,d\mu\ 
\le \frac1c
%\cdot\mu\big(A(z)\big)
\pd(z),}
with $c$ {and $z_0$} as in \eqref{dl}.
\end{thm}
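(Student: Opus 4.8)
The plan is to obtain $h'$ and $h''$ by mollifying the indicator functions of a slightly shrunk and a slightly enlarged superlevel set. Fix $\ell$, set $\vre=\delta$ with $\delta$ as in \eqref{dl}, and let $U$ be a \emph{symmetric} neighborhood of the identity furnished by condition (a) for this $\vre$. Choose once and for all $\theta\in C_c^\infty(G)$ with $\theta\ge 0$, $\supp\theta\subset U$, and $\int_G\theta\,dg=1$, and for $f$ on $X$ set $(\theta*f)(x)\df\int_G\theta(g)f(gx)\,dg$. I would then define $h'\df\theta*1_{A'}$ and $h''\df\theta*1_{A''}$, where $A'\df\Delta^{-1}([z+\delta,\infty))$ and $A''\df\Delta^{-1}([z-\delta,\infty))$. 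Both are smooth, nonnegative, and take values in $[0,1]$ since $0\le 1_A\le 1$ and $\int_G\theta\,dg=1$.

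Next I would verify the sandwich in \equ{4.2} using uniform continuity. If $h'(x)>0$, then $gx\in A'$ for some $g\in\supp\theta\subset U$, so $\Delta(gx)\ge z+\delta\ge z_0$; applying condition (a) at the point $gx$ with the element $g^{-1}\in U$ gives $\Delta(x)>\Delta(gx)-\delta\ge z$, whence $\supp h'\subset\Delta^{-1}([z,\infty))$ and $h'\le 1_{\Delta^{-1}([z,\infty))}$. Conversely, if $\Delta(x)\ge z\ge z_0$, then (a) applied directly at $x$ yields $\Delta(gx)>\Delta(x)-\delta\ge z-\delta$ for every $g\in U$, so $gx\in A''$ throughout $\supp\theta$ and $h''(x)=\int_G\theta(g)\,dg=1$; thus $h''\ge 1_{\Delta^{-1}([z,\infty))}$.

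The $L^1$ estimates are then exact: by $G$-invariance of $\mu$ and Fubini, $\|h'\|_1=\mu(A')=\pd(z+\delta)$ and $\|h''\|_1=\mu(A'')=\pd(z-\delta)$. Since $\pd$ is non-increasing, $\|h'\|_1\le\|h''\|_1$, and applying \eqref{dl} at $z+\delta$ and at $z$ (both $\ge z_0$) gives $\pd(z+\delta)\ge c\,\pd(z)$ and $\pd(z)\ge c\,\pd(z-\delta)$, which is exactly the chain $c\,\pd(z)\le\|h'\|_1\le\|h''\|_1\le\tfrac1c\pd(z)$.

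Finally, for regularity \eqref{reg} I would transfer derivatives onto the mollifier: a change of variables in $G$ shows $D^\alpha(\theta*f)=\pm(\hat D^\alpha\theta)*f$, where $\hat D^\alpha$ acts on $\theta$ on $G$, so Minkowski's integral inequality and the invariance of $\mu$ give $\|D^\alpha(\theta*f)\|_2\le\|\hat D^\alpha\theta\|_{L^1(G)}\|f\|_2$, hence $\|\theta*f\|_{2,\ell}\le C\|f\|_2$ with $C\df\sum_{|\alpha|\le\ell}\|\hat D^\alpha\theta\|_{L^1(G)}$ depending only on $\theta$ and $\ell$ (the modular function contributes only a bounded factor on the fixed compact set $U$, absorbed into $C$). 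Taking $f=1_{A'}$ and using $\|1_A\|_2=\sqrt{\mu(A)}$ yields $\|h'\|_{2,\ell}\le C\sqrt{\mu(A')}=C\sqrt{\|h'\|_1}$, and likewise for $h''$, with $C$ manifestly independent of $z$. This last step is the decisive one, and explains the form of \eqref{reg}: mollification controls $\|h\|_{2,\ell}$ only by $\|f\|_2=\sqrt{\mu(A)}$, not by $\mu(A)=\|f\|_1$, and since $\sqrt{\mu(A)}\gg\mu(A)$ for superlevel sets of small measure, the stronger \eqref{regold} cannot hold with a $z$-uniform constant — precisely the gap in the original \cite[Lemma~4.2]{KM} that replacing $\|h\|_1$ by $\sqrt{\|h\|_1}$ repairs.
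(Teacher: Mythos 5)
Your proof is correct and follows essentially the same route as the paper: mollify indicator functions of nearby superlevel sets by a smooth bump $\theta$ on $G$, pass the derivatives $D^\alpha$ onto $\theta$, and use Young's inequality $\|\theta * f\|_2 \le \|\theta\|_{L^1(G)}\|f\|_2$ to obtain the $\sqrt{\|h\|_1}$ bound of \eqref{reg} --- which is exactly the paper's mechanism, including its diagnosis of why \eqref{regold} was unattainable. The only (harmless) differences are that you convolve with $1_{A(z+\delta)}$ and $1_{A(z-\delta)}$ directly, working with the group neighborhood $U$ from condition (a), instead of the paper's metrically shrunk and enlarged sets $A'(z,\vre/2)$ and $A''(z,\vre/2)$ --- which makes your $L^1$ norms exact, $\|h'\|_1 = \pd(z+\delta)$ and $\|h''\|_1 = \pd(z-\delta)$, and saves the factor $1/\sqrt{c}$ in the constant $C$ --- and that you prove the Young bound by Minkowski's integral inequality rather than the paper's Cauchy--Schwarz-plus-Fubini argument.
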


%In \cite{KM} the  $(C,\ell)$-regularity was phrased using \eqref{regold} in place of \eqref{reg}. Unfortunately, the proof there contained a mistake, found by caused by an incorrect use of Young's inequality. Thus the 

Fix  a right-invariant
Riemannian metric on $G$ and the corresponding metric `dist' on $X$. For $g\in G$, let us denote by $\|g\|$ the
distance between $g\in G$ and the identity element of $G$. (Note that $\|g\| = \|g^{-1}\|$ due to the right-invariance of the metric.)
Now say  that the $G$-action on  $X$ is {\sl exponentially
mixing} %(also known as having a spectral gap) 
if there exist   $\lambda ,E > 0$ and $\ell\in\Z_+$
such that for any   $\varphi, \psi \in C^\infty_2(X)$  and for any $g\in G$ one has
\begin{equation}\label{em}\tag{EM}
| \langle g\varphi ,\psi \rangle | \le E{e^{ - \lambda \|g\|}\left\| \varphi  \right\|_{2,\ell}} {\left\| \psi  \right\|_{2,\ell}}.\end{equation}
Here $\langle \cdot ,\cdot \rangle$ stands for the inner product in $L^2(X,\mu)$.

One of the main goals of \cite{KM} was, given a sequence $\{f_t: t\in\N\}$ of elements of $G$ and a sequence of non-negative functions $\{h_t: t\in\N\}$ on $X$ such that $$\sum_{t=1}^\infty \|h_t\|_1
%\int_X h_t\,d\mu 
= \infty,$$ compare the growth of $\sum_{t=1}^N  h_t(f_tx) $ for $\mu$-a.e.\ $x\in X$ with the growth of $\sum_{t=1}^N\|h_t\|_1
% \int_X h_t\,d\mu
$ as $N\to\infty$. Results like that usually go by the name `dynamical Borel-Cantelli lemmas', see \cite{CK, HNPV}. In \cite[Proposition 4.1]{KM} such a conclusion was shown to follow from the exponential mixing of the $G$-action on $X$, the {\sl exponential divergence} of $\{f_t\}$, namely the condition
\begin{equation}\label{ed}\tag{ED}
\sup_{t\in\N}\sum_{s=1}^\infty e^{-\lambda\|f_sf_t^{-1}\|} < \infty \quad
\forall\,\lambda > 0,
\end{equation}
and the  regularity assumption \eqref{regold} on functions $\{h_t\}$. 
%Here we modify the definition of the notion off  regularity introduced in \cite[\S4.1]{KM}, as follows. %with $\int_X h\,d\mu$ in place of $\sqrt{\int_X h\,d\mu}$. 
%

\smallskip
In the following theorem we weaken the regularity condition \eqref{regold} to \eqref{reg} and %, using an observation due to Shucheng Yu,  
derive  the same conclusion:
% as in  \cite[Proposition 4.1]{KM}:

\begin{thm}\label{prop4.1} 
Suppose that %$X$ is  not compact and 
the $G$-action on $X$ is exponentially mixing.
Let $\{f_{t} : {t}\in \N\}$ be a sequence of elements of $G$ satisfying \eqref{ed},    and let  $ \{h_t : {t}\in \N\}$
be a  
sequence of non-negative  $(C,\ell)$-regular functions on $X$ such that $\|h_t\|_1 \le 1$ for all $t$,  and $\sum_{t=1}^\infty \|h_t\|_1
%\int_X h_t\,d\mu 
= \infty$. Then 
$$
\lim_{N\to\infty}\dfrac{\sum_{t=1}^N  h_t(f_tx)}{\sum_{t=1}^N \|h_t\|_1
%\int_X h_t\,d\mu
} 
= 1 \quad\text{for
$\mu$-a.e.\ 
$x\in X$.}
$$
\end{thm}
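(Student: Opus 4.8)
The plan is to run the standard second-moment (variance) method and then extract almost-everywhere convergence by a monotone-interpolation argument. Write $S_N(x) \df \sum_{t=1}^N h_t(f_t x)$ and $E_N \df \sum_{t=1}^N \|h_t\|_1$. Since $\mu$ is $G$-invariant and each $h_t\ge 0$, one has $\int_X h_t(f_t x)\,d\mu(x) = \int_X h_t\,d\mu = \|h_t\|_1$, so $E_N$ is exactly the mean of $S_N$, and the assertion is that $S_N/E_N\to 1$ almost everywhere as $N\to\infty$ (note $E_N\to\infty$ by hypothesis). The whole statement reduces to two ingredients: (i) a variance bound $\int_X (S_N-E_N)^2\,d\mu \le K E_N$ with $K$ independent of $N$; and (ii) a soft passage from (i) to a.e.\ convergence.

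For (i), I would expand the left-hand side as a double sum of covariances and, for each pair $(s,t)$, change variables by $x\mapsto f_t^{-1}x$ and use the invariance of $\mu$; writing the $G$-action on functions as $(g\varphi)(x)=\varphi(g^{-1}x)$, the $(s,t)$ integral becomes $\langle (f_tf_s^{-1})h_s,\,h_t\rangle$. Splitting $h_t=\til h_t+\|h_t\|_1$ into its mean-zero part $\til h_t$ and its mean, and using $\int\til h_t\,d\mu=0$, the constant contributions collapse to exactly $\|h_s\|_1\|h_t\|_1$, so the covariance equals $\langle (f_tf_s^{-1})\til h_s,\,\til h_t\rangle$. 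Applying exponential mixing \eqref{em} to the mean-zero pair and bounding
\[
\|\til h_t\|_{2,\ell}\le \|h_t\|_{2,\ell}+\|h_t\|_1\le (C+1)\sqrt{\|h_t\|_1},
\]
where the first step uses $\|1\|_{2,\ell}=1$ and the second uses the regularity \eqref{reg} together with $\|h_t\|_1\le\sqrt{\|h_t\|_1}$ (valid since $\|h_t\|_1\le 1$), gives the per-term estimate $E(C+1)^2 e^{-\lambda\|f_tf_s^{-1}\|}\sqrt{\|h_s\|_1\|h_t\|_1}$. I then apply $\sqrt{ab}\le\tfrac12(a+b)$, use the symmetry $\|f_tf_s^{-1}\|=\|f_sf_t^{-1}\|$ coming from right-invariance, and invoke \eqref{ed} to bound $\sum_s e^{-\lambda\|f_tf_s^{-1}\|}$ uniformly in $t$; this yields $\int_X(S_N-E_N)^2\,d\mu\le K E_N$ with $K=E(C+1)^2\sup_t\sum_s e^{-\lambda\|f_sf_t^{-1}\|}$.

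For (ii), choose $N_k\df\min\{N:E_N\ge k^2\}$; since the increments $\|h_t\|_1\le 1$ are bounded, one has $k^2\le E_{N_k}<k^2+1$, whence $E_{N_{k+1}}/E_{N_k}\to 1$. Chebyshev's inequality with (i) gives $\mu\big(|S_{N_k}/E_{N_k}-1|>\vre\big)\le K/(\vre^2 E_{N_k})\le K/(\vre^2 k^2)$, which is summable in $k$, so by Borel--Cantelli $S_{N_k}/E_{N_k}\to 1$ for $\mu$-a.e.\ $x$. Finally, since each $h_t\ge 0$ the map $N\mapsto S_N(x)$ is nondecreasing, so for $N_k\le N<N_{k+1}$,
\[
\frac{S_{N_k}(x)}{E_{N_{k+1}}}\le \frac{S_N(x)}{E_N}\le \frac{S_{N_{k+1}}(x)}{E_{N_k}};
\]
both outer quantities tend to $1$ because $E_{N_{k+1}}/E_{N_k}\to 1$, and the sandwich yields convergence of the full sequence.

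The main obstacle — and the whole reason the correction is needed — is the variance estimate under the weakened regularity \eqref{reg}. The original argument relied on \eqref{regold}, which would supply the sharper factor $\|h_s\|_1\|h_t\|_1$, whereas \eqref{reg} only gives $\sqrt{\|h_s\|_1\|h_t\|_1}$. The key point is that this square-root factor, once passed through $\sqrt{ab}\le\tfrac12(a+b)$ and combined with $\|h_t\|_1\le 1$, still linearizes into $\sum_t\|h_t\|_1$ and hence still produces a variance bounded by a constant multiple of $E_N$ — precisely what the Borel--Cantelli machinery requires. Everything else is the standard variance-plus-monotonicity scheme and should be routine.
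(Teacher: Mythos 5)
Your proof is correct, and its core---the variance estimate under the weakened regularity \eqref{reg}---is essentially the paper's: expand the variance into correlation terms, apply \eqref{em} together with \eqref{reg} to bound the $(s,t)$-term by a constant multiple of $e^{-\lambda\|f_sf_t^{-1}\|}\sqrt{a_sa_t}$ (where $a_t\df\|h_t\|_1$), and then remove the square root---the precise point where the old argument of \cite{KM} breaks---by linearizing it. Your inequality $\sqrt{a_sa_t}\le\frac12(a_s+a_t)$ is interchangeable with the paper's device (attributed there to Shucheng Yu) of splitting the double sum according to whether $a_s<a_t$, $a_s=a_t$ or $a_s>a_t$ and bounding $\sqrt{a_sa_t}$ by the larger of the two; both are then finished by \eqref{ed} and the symmetry $\|g\|=\|g^{-1}\|$. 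Two differences are worth recording. First, you apply \eqref{em} to the mean-zero parts $\til h_t=h_t-a_t$, at the harmless cost of $(C+1)^2$ in place of $C^2$, whereas the paper applies \eqref{em} directly to $h_t,h_s$ in the correlation form $|\langle f_sf_t^{-1}h_t,h_s\rangle-a_sa_t|\le\cdots$; since \eqref{em} as literally printed (with no subtracted product of means) cannot hold for constant functions, your reduction to mean-zero functions is arguably the cleaner reading, and the two readings agree on mean-zero functions, so this is a repair of wording rather than a mathematical divergence. Second, and more substantively, the paper does not reprove the passage from the second-moment bound to almost-everywhere convergence: it verifies the Schmidt--Sprind\v zuk condition \eqref{sp} (which involves a supremum over all windows $M\le t\le N$, not only $M=1$ as in your write-up, though your computation yields that uniformity for free) and then cites \cite[Lemma 2.6]{KM}, a special case of \cite[Chapter I, Lemma 10]{Sp}, which gives the limit together with a quantitative error term. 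Your replacement---Chebyshev plus Borel--Cantelli along the subsequence $N_k$ with $k^2\le E_{N_k}<k^2+1$, followed by the monotone sandwich, which is exactly where the hypotheses $h_t\ge0$ and $\|h_t\|_1\le1$ enter---is correct and self-contained; it buys independence from the Sprind\v zuk citation at the price of losing the error term, which the statement of the theorem does not require anyway.
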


%\begin{defn}[Sobolev norms] 

%%dist$(\cdot,\cdot)$ 
%\begin{defn}[Regularity] 

%\end{defn}
%The regularity condition used in \cite{KM} 

Using the above theorem in place of   \cite[Proposition 4.1]{KM} and Theorem \ref{lem4.2}
 in place of \cite[Lemma  4.2]{KM}, one can then recover %all the results of \cite{KM}. In particular, 
%Then combining 
\cite[Theorem  4.3]{KM}, that is, prove

\begin{thm}\label{thm4.3} 
Suppose that %$X$ is  not compact and 
the $G$-action on $X$ is exponentially mixing.
Let $\{f_{t} : {t}\in \N\}$ be a sequence of elements of $G$ satisfying \eqref{ed},     let  $\Delta$ be a DL function on $X$, and let {$\{r_{t} : {t}\in \N\}\subset [z_0,\infty)$} be
%a sequence of real numbers 
such that  
\eq{div}{
{\sum_{{t}=1}^{\infty} \pd(r_{t})} = \infty
.}
Then for some positive 
$c \le 1$ and for  almost
all $x\in X$ one has
$$
 c \le \liminf_{N\to\infty}\frac {\#\{ 1 \le {t}\le N\mid \Delta(f_{t}x)
\ge r_{t}\}}  {\sum_{t=1}^{N} \pd\big(r_{t}\big)} \le \limsup_{N\to\infty}\frac {\#\{ 1 \le {t}\le N\mid \Delta(f_{t}x)
\ge r_{t}\}}  {\sum_{t=1}^{N} \pd\big(r_{t}\big)} \le \frac1c.
$$
{Consequently, for any $\{r_{t} \}$ satisfying \equ{div} and almost all $x\in X$ one has $\Delta(f_tx) \ge r_t$ for infinitely many $t\in\N$. That is, in the terminology of \cite{KM}, the family of sets 
$$\{\Delta^{-1}\big([z,\infty)\big): z\in\R\}$$ is Borel-Cantelli for $\{f_{t}\}$.}
\end{thm}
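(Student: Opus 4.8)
The plan is to combine the two-sided smooth approximation of Theorem~\ref{lem4.2} with the dynamical Borel--Cantelli asymptotic of Theorem~\ref{prop4.1}. First I would fix $\ell\in\Z_+$ to be the order of differentiation appearing in the exponential mixing hypothesis \eqref{em}, and take $C>0$ to be the constant that Theorem~\ref{lem4.2} assigns to this $\ell$. For each $t\in\N$ we have $r_t\ge z_0$, so Theorem~\ref{lem4.2} applied with $z=r_t$ produces $(C,\ell)$-regular nonnegative functions $h'_t,h''_t$ with
\[
h'_t\le 1_{\Delta^{-1}([r_t,\infty))}\le h''_t,\qquad c\,\pd(r_t)\le\|h'_t\|_1\le\|h''_t\|_1\le\tfrac1c\,\pd(r_t).
\]
Evaluating at $f_tx$ and summing the pointwise inequalities gives, for every $x\in X$ and every $N$,
\[
\sum_{t=1}^N h'_t(f_tx)\ \le\ \#\{1\le t\le N : \Delta(f_tx)\ge r_t\}\ \le\ \sum_{t=1}^N h''_t(f_tx).
\]

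Next I would put the two families into the exact form demanded by Theorem~\ref{prop4.1}. Because $\mu$ is a probability measure, $\pd(r_t)\le 1$, hence $\|h''_t\|_1\le\tfrac1c$; replacing $h'_t,h''_t$ by $c\,h'_t,c\,h''_t$ we may therefore assume $\|h'_t\|_1,\|h''_t\|_1\le 1$. This rescaling preserves $(C,\ell)$-regularity, since multiplying a $(C,\ell)$-regular function by $c\le 1$ yields a $(\sqrt{c}\,C,\ell)$-regular one, and it leaves every ratio below unchanged because the factor $c$ cancels between numerator and denominator. The divergence hypothesis holds: $\|h'_t\|_1\ge c\,\pd(r_t)$ together with $\sum_t\pd(r_t)=\infty$ from \equ{div} gives $\sum_t\|h'_t\|_1=\infty$, and likewise $\sum_t\|h''_t\|_1=\infty$. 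Applying Theorem~\ref{prop4.1} separately to $\{h'_t\}$ and to $\{h''_t\}$ then yields, for $\mu$-a.e.\ $x\in X$,
\[
\lim_{N\to\infty}\frac{\sum_{t=1}^N h'_t(f_tx)}{\sum_{t=1}^N\|h'_t\|_1}=1\qquad\text{and}\qquad\lim_{N\to\infty}\frac{\sum_{t=1}^N h''_t(f_tx)}{\sum_{t=1}^N\|h''_t\|_1}=1.
\]

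Finally I would feed these asymptotics back into the sandwich. The key observation is that, although the quotient $\big(\sum_{t\le N}\|h'_t\|_1\big)\big/\big(\sum_{t\le N}\pd(r_t)\big)$ need not converge, it lies in $[c,\tfrac1c]$ for every $N$ thanks to the termwise bound $c\,\pd(r_t)\le\|h'_t\|_1\le\tfrac1c\,\pd(r_t)$, and the same holds for $h''_t$. Decomposing
\[
\frac{\sum_{t\le N} h'_t(f_tx)}{\sum_{t\le N}\pd(r_t)}=\frac{\sum_{t\le N} h'_t(f_tx)}{\sum_{t\le N}\|h'_t\|_1}\cdot\frac{\sum_{t\le N}\|h'_t\|_1}{\sum_{t\le N}\pd(r_t)}
\]
and letting $N\to\infty$, the first factor tends to $1$ while the second is bounded below by $c$, so the liminf of the left side is at least $c$; the analogous decomposition for $h''_t$ has second factor at most $\tfrac1c$, so its limsup is at most $\tfrac1c$. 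Combining with the sandwich inequality yields, for $\mu$-a.e.\ $x$, the asserted chain with liminf at least $c$ and limsup at most $\tfrac1c$. The ``consequently'' clause is immediate: since the liminf is at least $c>0$ while $\sum_t\pd(r_t)=\infty$, the counting function $\#\{1\le t\le N:\Delta(f_tx)\ge r_t\}$ is unbounded in $N$, i.e.\ $\Delta(f_tx)\ge r_t$ for infinitely many $t$.

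I expect the genuine difficulty to reside in this last step rather than in the mechanical invocation of the two theorems. Because the approximants only sandwich the indicator and carry mass merely comparable to $\pd(r_t)$ rather than equal to it, one cannot hope for an exact limit; the correct output is the product of a sequence converging to $1$ with a uniformly bounded but possibly oscillating ratio, and it is precisely this that forces the two-sided bound by $c$ and $\tfrac1c$. A secondary point needing attention is the uniformity of $C$ in $t$, so that a single constant governs the regularity of the whole family and Theorem~\ref{prop4.1} applies; this is exactly what Theorem~\ref{lem4.2} guarantees by giving one $C$ valid for all $z\ge z_0$, the rescaling by $c\le 1$ only lowering it.
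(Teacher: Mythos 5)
Your proposal is correct and follows exactly the route the paper intends: it applies Theorem~\ref{lem4.2} with $z=r_t$ to produce the two $(C,\ell)$-regular families, feeds them into Theorem~\ref{prop4.1}, and concludes by the sandwich argument, which is precisely how \cite{KM} proves its Theorem~4.3 (the present paper only sketches this substitution in one line). Your handling of the technical point the sketch glosses over --- rescaling by $c$ to meet the hypothesis $\|h_t\|_1\le 1$ of Theorem~\ref{prop4.1} while preserving $(C,\ell)$-regularity and the limiting ratios --- is also correct.
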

% In particular,  

%\begin{thm}\label{thm4.3} Suppose that %$X$ is  not compact and 
%the $G$-action on $X$ is exponential mixing.
%Let $\{f_{t} : {t}\in \N\}$ be a sequence of elements of $G$ satisfying \eqref{ed}, let $\Delta$ be a DL function on $X$,   and  het 
% $\{r_{t}: {t}\in \N\}$ be
%a sequence of real numbers such that  
%$
%{ \sum_{{t}=1}^{\infty} %\mu\big(A(r_t)\big)
%\pd(r_t)} = \infty.
%$
%{\rm (1.6b)} 
%Then for some positive 
%$c \le 1$ and for  almost
%all $x\in \ggm$ one has
%$$
% c \le \liminf_{N\to\infty}\frac {\#\{ 1 \le {t}\le N\mid \Delta(f_{t}x)
%\ge r_{t}\}}  {\sum_{t=1}^{N} \pd(r_t)}
%\mu\big(A(r_{t})\big)} 
%\le \limsup_{N\to\infty}\frac {\#\{ 1 \le {t}\le N\mid \Delta(f_{t}x)
%\ge r_{t}\}}  {\sum_{t=1}^{N} \pd(r_t)}\le \frac1c\,.
%$$
%Where $c$ is the constant from {\rm (DL)}.
%\end{thm}

 \bigskip
\noindent { {\bf Acknowledgements}. The authors are grateful to  Dubi Kelmer for bringing their attention to the mistake in \cite[Lemma  4.2]{KM}, and to Shucheng Yu for a suggestion how to correct it. Thanks are also due to Nick Wadleigh for helpful discussions.}

\section{Proofs}\label{proofs}
 Let us state a general form of Young's inequality, whose proof we give for the sake of self-containment of the paper.  Denote by $m$ the Haar measure on $G$ normalized so that the quotient map $G\to X$ locally sends $m$ to $\mu$. For $\psi\in L^1(G, m)$ and $h\in L^1(X, \mu)$, define $\psi\ast h$ by $$(\psi\ast h)(x) \df  \int_G \psi(g)h(g^{-1}x) \,dm(g).$$

 \begin{lem}\label{young}  Let $\psi\in L^1(G, m)$ and $h\in L^2(X, \mu)$. Then $\|\psi\ast h\|_2\leq \|\psi\|_1  \|h\|_2.$ \end{lem}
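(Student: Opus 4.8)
The plan is to deduce the estimate from Minkowski's integral inequality, with the $G$-invariance of $\mu$ supplying the only structural input. The starting observation is that for each fixed $g\in G$ the translate $x\mapsto h(g^{-1}x)$ has the same $L^2$ norm as $h$: since $\mu$ is $G$-invariant, the measure-preserving change of variables $x\mapsto g^{-1}x$ gives $\int_X |h(g^{-1}x)|^2\,d\mu(x) = \|h\|_2^2$. Thus $\psi\ast h$ is an $L^2$-valued integral of a family of functions all of norm $\|h\|_2$, weighted by the scalar $\psi(g)$.

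Granting this, I would apply Minkowski's integral inequality to the $L^2(X,\mu)$-valued integral defining $\psi\ast h$:
$$\|\psi\ast h\|_2 = \Big\| \int_G \psi(g)\,h(g^{-1}\cdot)\,dm(g)\Big\|_2 \le \int_G |\psi(g)|\,\big\|h(g^{-1}\cdot)\big\|_2\,dm(g) = \|h\|_2\int_G|\psi(g)|\,dm(g) = \|\psi\|_1\,\|h\|_2,$$
which is exactly the claimed bound. Here the middle equality uses the translation-invariance of the norm established above, and the final step is just the definition of $\|\psi\|_1$.

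The one point requiring care --- and the main technical obstacle --- is justifying the interchange of integration and norm implicit in Minkowski's inequality. This rests on the joint measurability of $(g,x)\mapsto \psi(g)h(g^{-1}x)$ on $G\times X$ together with the finiteness of the iterated integral $\int_G|\psi(g)|\,\|h(g^{-1}\cdot)\|_2\,dm(g) = \|\psi\|_1\|h\|_2 < \infty$, which also guarantees that $\psi\ast h$ is well-defined almost everywhere and lies in $L^2$. Equivalently, one can avoid invoking the vector-valued form directly and argue by duality: for $\varphi\in L^2(X)$ with $\|\varphi\|_2\le 1$, Tonelli's theorem lets one write $\langle \psi\ast h,\varphi\rangle = \int_G \psi(g)\,\langle g h,\varphi\rangle\,dm(g)$, where $gh \df h(g^{-1}\cdot)$; estimating the inner product by Cauchy--Schwarz and using $\|gh\|_2 = \|h\|_2$ yields $|\langle \psi\ast h,\varphi\rangle| \le \|\psi\|_1\|h\|_2$, and taking the supremum over such $\varphi$ recovers the lemma. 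In either formulation the substantive content is the same: the invariance of $\mu$ collapses the norms of all translates to $\|h\|_2$, and the triangle inequality for the integral does the rest.
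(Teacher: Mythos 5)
Your proof is correct, but it takes a different route from the paper's. The paper proves the lemma by a scalar-valued trick: it writes $|\psi(g)|\,|h(g^{-1}x)| = |\psi(g)|^{1/2}|h(g^{-1}x)|\cdot|\psi(g)|^{1/2}$, applies Cauchy--Schwarz in the variable $g$ to get the pointwise bound $|(\psi\ast h)(x)|^2 \le \|\psi\|_1 \int_G |\psi(g)|\,|h(g^{-1}x)|^2\,dm(g)$, and then integrates over $X$, invoking Fubini and the $G$-invariance of $\mu$ to collapse the inner integral to $\|h\|_2^2$. You instead view $\psi\ast h$ as an $L^2(X,\mu)$-valued integral and invoke Minkowski's integral inequality (or, in your alternative formulation, duality plus Tonelli), with the invariance of $\mu$ entering through the identity $\|h(g^{-1}\cdot)\|_2 = \|h\|_2$. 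Both arguments rest on exactly the same structural input --- the invariance of $\mu$ --- but they package the analysis differently. The paper's computation is deliberately self-contained (the authors state they include the proof ``for the sake of self-containment''), using nothing beyond Cauchy--Schwarz and Fubini; yours is shorter and more conceptual, identifying the estimate as an instance of the triangle inequality for vector-valued integrals, and it generalizes verbatim to $\|\psi\ast h\|_p \le \|\psi\|_1\|h\|_p$ for any $p\ge 1$, whereas the paper's exponent-splitting trick is tailored to $p=2$. The measurability point you flag (joint measurability of $(g,x)\mapsto \psi(g)h(g^{-1}x)$) is the right one to worry about, and it is equally implicit in the paper's use of Fubini, so neither proof is at a disadvantage there; your duality variant, which replaces the black-box appeal to Minkowski's inequality with Tonelli plus Cauchy--Schwarz, is in effect a proof of that inequality in this special case and brings your argument closest to the paper's.
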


% \begin{proof} 
 \noindent{\it Proof.} We have 
  \begin{equation*}
 \begin{split}|(\psi\ast h)(x)| &\leq \int_G|\psi(g)|^{1/2}  |h(g^{-1}x)|\cdot |\psi(g)|^{1/2} \,dm(g)\\ \text{(by Cauchy-Schwarz)} &\le \left(\int_G |\psi(g)| \,dm(g)\right)^{1/2}  \left( \int_G|h(g^{-1}x)|^2   |\psi(g)| \,d m(g) \right)^{1/2}.\end{split}
 \end{equation*}
  %By the Cauchy-Schwarz inequality, this integral is not greater than $$\left(\int_G |\psi(g)| \,dm(g)\right)^{1/2}  \left( \int_G|h(g^{-1}x)|^2   |\psi(g)| \,d m(g) \right)^{1/2}. $$ 
  Thus we have $$|(\psi\ast h)(x)|^2 \leq \|\psi\|_1  \int_G |\psi(g)|\cdot  |h(g^{-1}x)|^2  \,d m(g).$$ Integrating over $X$ and using Fubini's Theorem gives  
 \begin{equation*}
 \begin{split}
 \|\psi\ast h\|_2^2&\leq \|\psi\|_1  \int_X\int_G|\psi(g)|\cdot |h(g^{-1}x)|^2\,dm(g)\,d\mu(x)\\ &=\|\psi\|_1\int_G|\psi(g)|\int_X|h(g^{-1}x)|^2\,d\mu(x)\,dm(g),
 \end{split}
 \end{equation*}
which, by the $G$-invariance of $\mu$, is the same as $$\|\psi\|_1\int_G|\psi(g)|\,dm(g)\int_X|h(x)|^2\,d\mu(x)= \|\psi\|_1^2 \cdot \|h\|_2^2.\qquad\qquad\qquad \qquad\qquad\qed$$ 
%finishing the proof. 
%\end{proof} 
 
  \begin{proof}[Proof of Theorem \ref{lem4.2}] We follow the %argument of \cite{KM}. 
proof of  \cite[Lemma  4.2]{KM}.
 For  $z\in\R$, let us
use the notation $$A(z) \df \Delta^{-1}\big([z,\infty)\big).$$ Then, for  $\vre  > 0$, let us
denote by $A'(z,\vre )$ the set of all points of $A(z)$ which are not
$\vre $-close to $\partial A(z)$, i.e. $$A'(z,\vre )\df \{x\in A(z) :
\text{dist}\big(x,\partial A(z)\big) \ge \vre \},$$ and  by $A''(z,\vre )$ the
$\vre $-neighborhood of  $A(z)$, namely
$$A''(z,\vre )\df \{x\in X : \text{dist}\big(x,
A(z)\big) \le \vre \}.$$ 
Choose {$z_0$,} $\delta$ and $c$ {as in}  (DL).   Then, using the
uniform continuity of $\Delta$ {on $\Delta^{-1}\big([z_0,\infty)\big)$}, find $\vre  > 0$ such that 
$$
|\Delta(x) -
\Delta(y)| < \delta\text{ whenever }{\Delta(x) \ge z_0 \text{ and }}\dist(x,y) < \vre .%\tag 4.3
$$
% From the above inequality i
It %immediately 
follows that for all $z{\ge z_0}$, % one has 
$$
A(z+\delta) \subset A'(z,\vre )\subset A(z)\subset A''(z,\vre )\subset
A(z-\delta);$$ therefore  one
can apply (DL) to conclude that  
\eq{4.4}{
c \mu\big( A(z)\big)\le \mu\big( A'(z,\vre )\big) \le \mu\big(
A''(z,\vre )\big) \le \frac1c\mu\big(A(z)\big).%\tag 4.4 
}

Now  take a non-negative $\psi\in C^\infty(G)$ of $L^1$ norm $1$ such that
supp$\,\psi$ belongs to the ball of radius $\vre /4$ centered in $e\in G$.
 Fix  $z{\ge z_0}$ and consider   functions $h' \df \psi* 1_{A'(z,\vre /2)}$ and $h'' \df \psi* 1_{A''(z,\vre /2)}$. Then one clearly has
$$
1_{A'(z,\vre )} \le h' \le 1_{A(z)} \le h'' \le 1_{A''(z,\vre )} ,%\tag 
$$
which, together with  \equ{4.4},   
 immediately implies \equ{4.2}.  It remains to choose $\ell\in\Z_+$ and find $C$ {(independent of $z$)} such that both $h'$ and $h''$ are $(C,\ell)$-regular. Take a multiindex $\alpha$ with $|\alpha| \le \ell$, and write $$\|D^\alpha h'\|_2 =  \|D^\alpha(\psi* 1_{A'(z,\vre /2)})\|_2  = \|D^\alpha(\psi)*
1_{A'(z,\vre /2)}\|_2.$$ Then, by the Young
inequality, 
$$
\|D^\alpha h'\|_2 \le \|D^\alpha(\psi)\|_1   \sqrt{\mu\big(A'(z,\vre /2)\big)}  \le
\|D^\alpha(\psi)\|_1  \sqrt {\mu\big(A(z)\big)}  \le
\|D^\alpha(\psi)\|_1\left(\frac{\|h'\|_1}{c}\right)^{1/2}  % \int_X h'\,d\mu
.
$$  
Similarly, $$\|D^\alpha h''\|_2 \le \|D^\alpha(\psi)\|_1 \sqrt{\mu\big(A''(z,\vre /2)\big)} \le \|D^\alpha(\psi)\|_1  
%\frac1{c}\mu\big(A(z)\big) 
\left(\frac{\mu\big(A(z)\big)}{c}\right)^{1/2}\le %\frac1{c}
\|D^\alpha(\psi)\|_1  %\cdot  \int_X h''\,d\mu
\left(\frac{\|h''\|_1}{c}\right)^{1/2};$$
hence, with $C = \frac1{\sqrt{c}}\sum_{|\alpha|\le \ell}\|D^\alpha(\psi)\|_1$, both  $h'$ and  $h''$ are
$(C,\ell)$-regular, and the theorem is proven.
 \end{proof}

  \begin{proof}[Proof of Theorem \ref{prop4.1}] Denote $\int_X h_{t}\,d\mu = \|h_t\|_1$ by $a_t$. Following the argument in  \cite{KM}, our goal is to show that the sequence of functions $\{h_t\circ f_t\}$ satisfies a second-moment condition dating back to the work of Schmidt and %formulated by 
  Sprind\v zuk: % \cite[Chapter I, Lemma 10]{Sp}: 
\begin{equation}\label{sp}\tag{SP}
%\exists\,C > 0\text{ such that }
\sup_{1\le M < N}\frac{\int_X\Big(\sum_{{t} = M}^{N}
h_{t}(f_tx) - \sum_{{t} = M}^{N}a_t  \Big)^2\,d\mu}{
\sum_{{t} = M}^{N}a_t} < \infty. 
\end{equation} 
the conclusion of the theorem will then follow in view of  \cite[Lemma  2.6]{KM}, which is a special case of \cite[Chapter I, Lemma 10]{Sp}.

\smallskip
Take $1 \le M < N$. As in \cite[Remark  2.7]{KM}, one can rewrite the numerator as \linebreak
$\sum_{s,t=M}^N \left(\langle f_t^{-1}h_t, f_s^{-1}h_s\rangle -a_sa_t\right)$, and then estimate it using the exponential mixing of the $G$-action on $X$: 
\begin{equation*}\begin{split}
\left|\sum_{s,t=M}^N  \langle f_t^{-1}h_t, f_s^{-1}h_s\rangle -a_sa_t\right| &\le \sum_{s,t=M}^N \left|\langle f_s f_t^{-1}h_t,  h_s\rangle -a_sa_t\right| \\
\text{(with $E,\lambda,\ell$ as in \eqref{em}) } &\le E\sum_{s,t=M}^N  {e^{ - \lambda \|f_s f_t^{-1}\|}\left\| h_t  \right\|_{2,\ell}} {\left\| h_s  \right\|_{2,\ell}}\\
\text{(by the $(C,\ell)$-regularity of $\{h_t\}$) }&\le EC^2 \sum_{s,t=M}^N   {e^{ - \lambda \|f_s f_t^{-1}\|} \sqrt{a_sa_t}}.
\end{split}\end{equation*}
Now, following an observation communicated to us by Shucheng Yu, split the above sum according to the comparison between $a_s$ and $a_t$:
\eq{3sums}{  \sum_{a_s = a_t}  e^{ - \lambda \|f_s f_t^{-1}\|} \sqrt{a_sa_t}   \ +   \sum_{a_s < a_t}  e^{ - \lambda \|f_s f_t^{-1}\|} \sqrt{a_sa_t} \   +     \sum_{a_s > a_t}  e^{ - \lambda \|f_s f_t^{-1}\|} \sqrt{a_sa_t},}
where the values of $s,t$ in the last three sums range between $M$ and $N$.  By symmetry, the last two sums are equal. Thus \equ{3sums} is not greater than
\begin{equation*}\begin{split}
  \sum_{a_s = a_t} e^{ - \lambda \|f_s f_t^{-1}\|} a_t  +   2\sum_{ a_s < a_t}  e^{ - \lambda \|f_s f_t^{-1}\|} a_t &\le 2  \sum_{s,t=M}^N e^{ - \lambda \|f_s f_t^{-1}\|} a_t \\
&\le 2  \sum_{t=M}^N a_t  \sum_{s=M}^Ne^{ - \lambda \|f_s f_t^{-1}\|}  
\\
&\le 2  \sum_{t=M}^N a_t \cdot \sup_{t\in\N}\sum_{s=1}^\infty e^{ - \lambda \|f_s f_t^{-1}\|}\ ,  \end{split}\end{equation*}
and the proof of \eqref{sp} is finished in view of \eqref{ed}.
 \end{proof}

\end{document}